\newtheorem{thm}{Theorem}
\newtheorem{defn}[thm]{Definition}
\newtheorem{prp}{Proposition}
\newtheorem{claim}{Claim}
\newtheorem{ques}{Question}
\title{\textbf{Perron's capacity of random sets}}
\author{Anthony Gauvan}
\begin{document}

\maketitle

\begin{abstract}
We answer in a probabilistic setting two questions raised by Stokolos in private communication. Precisely, given a sequence of random variables $\left\{ X_k : k \geq 1\right\}$ uniformly distributed in $(0,1)$ and independent, we consider the following random sets of directions $$\Omega_{\text{rand},\text{lin}} := \left\{  \frac{\pi X_k}{k}: k \geq 1\right\}$$ and $$\Omega_{\text{rand},\text{lac}} := \left\{ \frac{ \pi X_k}{2^k} : k\geq 1 \right\}.$$ We prove that almost surely the directional maximal operators associated to those sets of directions are not bounded on $L^p(\mathbb{R}^2)$ for any $1 < p < \infty$.
\end{abstract}

We denote by $\mathcal{R}$ the collection of all rectangles in the plane ; if $R$ belongs to $\mathcal{R}$, we denote by $\omega_R \in (0,\pi)$ the angle that its longest side makes with the $Oy$-axis. Without loss of generality, we will always suppose that we have actually $0 \leq \omega_R \leq \frac{\pi}{2}$.

\section{Introduction}

Given any set of directions $\Omega \subset \mathbb{S}^1$, one can define the directional family of rectangle $\mathcal{R}_\Omega$ as $$ \mathcal{R}_\Omega := \left\{ R \in \mathcal{R} : \omega_R \in \Omega \right\}$$ and then consider the \textit{directional maximal operator} $M_\Omega$ defined for $f : \mathbb{R}^2 \rightarrow \mathbb{R}$ locally integrable and $x \in \mathbb{R}^2$ as $$M_\Omega f(x) := \sup_{ x \in R \in \mathcal{R}_\Omega} \frac{1}{\left| R\right|}\int_R \left| f\right|.$$ The boundedness property of the operator $M_\Omega$ is deeply related to the geometric structure of the set of directions $\Omega$. For example, in the case where $\Omega = \mathbb{S}^1$, the following obstruction of the Euclidean plane (which is also true in higher dimension) allows us to completely describe the boundedness property of the operator $M_{\mathbb{S}^1}$.

\begin{thm}[Kakeya blow with $\mathcal{R}$]\label{THMKB}
Given any large constant $A \gg 1$, there exists a finite family of rectangles $\left\{ R_i : i \in I \right\} \subset {\mathcal{R}}$ such that we have $$\left| \bigcup_{i \in I} TR_i \right| \geq A\left| \bigcup_{i \in I} R_i \right|.$$ Here, we have denoted by $TR$ the rectangle $R$ translated along its longest side by its own length
\end{thm}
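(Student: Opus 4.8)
The plan is to recognise this as the finitary Besicovitch--Perron (``Perron tree'') phenomenon and to produce the family by sprouting. Two reductions first. Since the inequality is invariant under rigid motions and dilations, scale is free; and since $R$ and $TR$ abut along a common side, $|R\cup TR|=2|R|$ while $R\subset R\cup TR$. It therefore suffices to produce, for every $\varepsilon>0$, a finite family of congruent rectangles $\{R_i:i\in I\}$ of dimensions $\asymp(1/\#I)\times 1$ with $\bigl|\bigcup_{i}R_i\bigr|\le\varepsilon$ and $\bigl|\bigcup_{i}TR_i\bigr|\ge c_0$ for an absolute constant $c_0>0$: then $\varepsilon:=c_0/A$ gives the theorem. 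In words, each $R_i\cup TR_i$ is a thin tube of length $2$, and we want the ``first halves'' $R_i$ packed into a set of tiny measure while the ``second halves'' $TR_i$ still cover a set of definite measure.

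The engine is Perron's sprouting lemma. Fix $n$, set $N=2^n$, take a fixed triangle $\Delta$ with base $[0,1]\times\{0\}$ and apex above it, and cut the base into $N$ equal intervals, getting subtriangles $\Delta_1,\dots,\Delta_N$ with common apex; the median of $\Delta_j$ points in a direction $\theta_j$, and $j\mapsto\theta_j$ runs essentially linearly across an arc of aperture $\asymp 1$. Pairing the subtriangles and sliding each pair along the base line until the two triangles overlap in a scaled copy of themselves plus two thin ``horns'', then iterating over the $n$ dyadic levels, yields horizontal shifts so that $\Delta_j':=\Delta_j+(t_j,0)$ obeys $\bigl|\bigcup_j\Delta_j'\bigr|\le\eta(n)\,|\Delta|$ with $\eta(n)\to0$ (the classical rate being $\eta(n)\asymp1/n=1/\log_2 N$). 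A slide is a translation, so $\Delta_j'$ still contains a segment of length $\asymp1$ in direction $\theta_j$.

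Now pass to rectangles: with $\delta\asymp1/N$, let $R_j\subset\Delta_j'$ be a $\delta\times 1$ rectangle around that segment, chosen so that $\sum_j|R_j|\asymp|\Delta|$. Then $\bigcup_j R_j\subset\bigcup_j\Delta_j'$, so $\bigl|\bigcup_j R_j\bigr|\le\eta(n)|\Delta|\to0$: the first halves are squeezed. For the second halves, $TR_j$ is $R_j$ pushed forward by unit distance along $\theta_j$; because the $\theta_j$ occupy an arc of aperture $\asymp1$ and are $\asymp1/N$-separated, the translated rectangles $TR_j$ fan out and are essentially pairwise disjoint --- concretely, one picks a line transverse to the fan, checks that $TR_j$ meets it in a segment of length $\asymp\delta$ and that consecutive such segments are $\gtrsim\delta$ apart (using the explicit shifts $t_j$), so that the trace of $\bigcup_j TR_j$ on each of an $\asymp1$-length pencil of such transversals has length $\gtrsim1$. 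Hence $\bigl|\bigcup_j TR_j\bigr|\gtrsim\sum_j|TR_j|\asymp1$. Taking $\varepsilon=\eta(n)|\Delta|$ and then $n$ so large that $\eta(n)|\Delta|\le c_0/A$ completes the argument.

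The bulk of the work is the quantitative sprouting estimate $\eta(n)\to0$: one must follow how the ``horns'' proliferate through the $n$ levels of pairing and show that the area surviving after all the slides is only $\asymp1/\log N$ of the original. A secondary point is to confirm that the forward translates really do disperse rather than re-align, i.e.\ that the shifts $t_j$ do not conspire against us; this is robust but needs their explicit description, and it cannot be replaced by a crude $L^2$ overlap count, since two $\asymp1/N$-separated tubes of width $\asymp1/N$ overlap in area $\asymp1/N$, making the pair sum $\asymp\log N$ rather than $o(1)$. Both ingredients are classical, so as an alternative one may simply invoke the Besicovitch--Perron construction from the literature.
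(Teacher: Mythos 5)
The paper does not prove this theorem at all --- it simply cites Fefferman's ball-multiplier paper --- so your sketch of the Besicovitch--Perron sprouting construction is exactly the argument the citation points to, and your overall architecture (make $\bigl|\bigcup_j R_j\bigr|$ small by sprouting, make $\bigl|\bigcup_j TR_j\bigr|\gtrsim 1$ by fanning out, take the ratio) is the right one. Note also that the classical rate $\eta(n)\asymp 1/\log N$ is quantitatively sufficient here: you only need the ratio $\bigl|\bigcup TR_j\bigr|/\bigl|\bigcup R_j\bigr|$ to exceed $A$, and $\log N\to\infty$ does that.

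The genuine gap is the second half, $\bigl|\bigcup_j TR_j\bigr|\gtrsim 1$, which you assert via ``the $TR_j$ are essentially pairwise disjoint, check traces on transversals'' but do not establish. You yourself compute that the total pairwise overlap $\sum_{j\neq j'}|TR_j\cap TR_{j'}|$ can be as large as $\asymp\log N\gg\sum_j|TR_j|$, which is in tension with essential pairwise disjointness; the two statements are reconciled only if the crossings between tubes at comparable angles happen \emph{outside} the range occupied by the forward translates (as they do for a single bush, where all overlap is concentrated near the common point), and whether that holds for the Perron tree depends precisely on the explicit shifts $t_j$ --- the very thing you defer. This is not a formality: it is the nontrivial half of the lemma, and the classical treatments do not prove it by pairwise disjointness on transversals. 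They either run an induction through the sprouting stages showing that the union of the ``reaches'' of the sprouted figure contains the reach of the figure at the previous stage (so that $\bigl|\bigcup_j TR_j\bigr|$ is bounded below by a fixed set independent of $n$), or they prove Fefferman's reformulation that each translate meets the union of the originals in a fixed proportion of its area, $|TR_j\cap\bigcup_i R_i|\geq c\,|R_j|$, and deduce the measure estimate from that. Either supply one of these arguments or, as you suggest and as the paper itself does, cite the construction outright; but as written the transversal-separation claim is an unproved assertion sitting exactly where the difficulty lives.
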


The reader can find a proof of this Theorem in \cite{FEFFERMAN}: it follows that given any large constant $A \gg 1$, there exists a bounded set $E$ satisfying the following estimate $$ \left| \left\{ M_{\mathbb{S}^1} \mathbb{1}_E \geq \frac{1}{2} \right\} \right| \geq A\left|E \right|.$$ It suffices to set $E = \cup_{i \in I} R_i$ and to observe that we have the following inclusion $$\bigcup_{i \in I} TR_i \subset \left\{ M_{\mathbb{S}^1} \mathbb{1}_E \geq \frac{1}{2} \right\}.$$ The previous estimate easily implies the following.

\begin{thm}
The operator $M_{\mathbb{S}^1}$ is not bounded on $L^p$ for any $p < \infty$.
\end{thm}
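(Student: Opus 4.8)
The plan is to argue by contradiction, turning the geometric blow-up of Theorem~\ref{THMKB} into a quantitative violation of the $L^p$ inequality. Suppose $1 < p < \infty$ and assume, for contradiction, that $M_{\mathbb{S}^1}$ is bounded on $L^p(\mathbb{R}^2)$ with operator norm $C = C_p < \infty$. Fix a large constant $A \gg 1$ and, as explained in the discussion preceding the statement, let $E = \bigcup_{i \in I} R_i$ be the bounded set furnished by Theorem~\ref{THMKB}. Then $0 < |E| < \infty$, since $E$ is a finite union of non-degenerate rectangles, and, by the inclusion $\bigcup_{i} TR_i \subset \{ M_{\mathbb{S}^1} \mathbb{1}_E \geq \frac12 \}$ already recorded, $\left| \left\{ M_{\mathbb{S}^1}\mathbb{1}_E \geq \tfrac12 \right\} \right| \geq A |E|$.

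Next I would test the assumed inequality on $f = \mathbb{1}_E$. On the one hand $\| M_{\mathbb{S}^1}\mathbb{1}_E \|_{p}^{p} \leq C^{p} |E|$; on the other hand, discarding everything outside the super-level set,
$$ \| M_{\mathbb{S}^1}\mathbb{1}_E \|_{p}^{p} \;\geq\; \left( \tfrac12 \right)^{p} \left| \left\{ M_{\mathbb{S}^1}\mathbb{1}_E \geq \tfrac12 \right\} \right| \;\geq\; \left( \tfrac12 \right)^{p} A\, |E|. $$
Dividing by $|E| > 0$ yields $A \leq (2C)^{p}$. Since $C$ depends only on $p$ whereas $A$ was an arbitrarily large constant, this is absurd; hence $M_{\mathbb{S}^1}$ is unbounded on $L^p$ for every $1 < p < \infty$. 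To cover the remaining endpoint $p = 1$ I would give a separate, elementary argument not relying on the Kakeya obstruction: take $f = \mathbb{1}_{B(0,1)} \in L^1(\mathbb{R}^2)$ and observe that for $|x| \geq 1$ the axis-parallel square centred so as to contain both the origin and $x$ has side length comparable to $|x|$, so that $M_{\mathbb{S}^1} f(x) \gtrsim |x|^{-2}$; since $|x|^{-2}$ is not integrable on $\{ |x| \geq 1 \}$, we conclude $M_{\mathbb{S}^1} f \notin L^1$.

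I do not expect a serious obstacle in this argument: once Theorem~\ref{THMKB} is available, the only points needing justification are that $E$ has strictly positive finite measure and the routine inclusion $\bigcup_i TR_i \subset \{ M_{\mathbb{S}^1}\mathbb{1}_E \geq \frac12 \}$, both of which are immediate and have already been noted. The genuine difficulty is entirely contained in the Kakeya blow-up itself, which here is cited as a black box.
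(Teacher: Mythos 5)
Your argument is correct and is precisely the standard deduction the paper has in mind when it says the Kakeya estimate "easily implies" the theorem: test on $\mathbb{1}_E$, apply Chebyshev on the super-level set, and let $A \to \infty$. (The only cosmetic remark is that this same computation already covers $p=1$, giving $A \leq 2C$, so your separate endpoint argument, while valid, is not needed.)
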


Far from being exotic, Theorem \ref{THMKB} has deep implications in harmonic analysis: for example, it is a central part of Fefferman's work in \cite{FEFFERMAN} where he disproves the famous \textit{Ball multiplier} conjecture. A natural question is the following: given a set of directions $\Omega$, is it possible to make a Kakeya blow only with the directional family $\mathcal{R}_\Omega$ ? This question have been investigated by different analyst among which \cite{DM}, \cite{GAUVAN}, \cite{KATZ}, \cite{NSW} or \cite{HareR} to cite a few. In \cite{BATEMAN}, Bateman answered this question as he classified the $L^p(\mathbb{R}^2)$ behavior of those operators according to the \textit{geometry} of the set $\Omega$. Precisely, he proved that the notion of \textit{finitely lacunary} for a set of directions were the correct one to consider.

\begin{thm}[Bateman]\label{T0}
We have the following alternative:
\begin{itemize}
    \item if $\Omega$ is finitely lacunary then $M_\Omega$ is bounded on $L^p$ for any $p > 1$.
    \item if $\Omega$ is not finitely lacunary then it is possible to make a Kakeya blow with the family $\mathcal{R}_\Omega$. In particular, the operator $M_\Omega$ is not bounded on $L^p$ for any $p < \infty$.
\end{itemize}
\end{thm}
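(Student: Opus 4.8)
The plan is to establish the dichotomy by treating its two halves independently: the first implication (finitely lacunary $\Rightarrow$ bounded) by a covering-and-composition argument, the second (not finitely lacunary $\Rightarrow$ Kakeya blow) by an explicit Perron-type construction driven by the failure of lacunarity.

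For the positive direction, one starts from the classical fact that if $\Omega$ is a single lacunary sequence accumulating at one point, say $|\omega_{k+1}-\omega_\infty|\le \lambda|\omega_k-\omega_\infty|$ for a fixed $\lambda\in(0,1)$, then $M_\Omega$ is bounded on $L^p$ for every $p>1$: one splits $M_\Omega$ into the contributions of the dyadic angular annuli around $\omega_\infty$, on each of which the admissible rectangles essentially point in a single direction, and sums in $k$ to dominate $M_\Omega$ by a composition of one-dimensional Hardy--Littlewood maximal operators (C\'ordoba--Fefferman, Str\"omberg, Nagel--Stein--Wainger). For $\Omega$ lacunary of finite order $N$ one induces on $N$: the order-$N$ structure consists of an order-one lacunary family of angular gaps inside each of which $\Omega$ is lacunary of order $N-1$, and composing the two bounds (via the relevant vector-valued inequalities) yields boundedness on all $L^p$, $p>1$, with a constant depending on $N$ and $\lambda$ but finite. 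A finite union of lacunary sets is then absorbed by subadditivity of the maximal operator, settling the first bullet.

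For the negative direction, which is the core, I would first pass from the metric notion of lacunarity to a combinatorial one: encode $\Omega$ by a rooted tree recording how its points distribute among successively refined angular intervals, so that ``$\Omega$ finitely lacunary'' becomes ``this tree has uniformly bounded branching complexity''. Under the hypothesis that $\Omega$ is \emph{not} finitely lacunary, this furnishes, for every $N$, a finite subset $\Omega_N\subset\Omega$ whose tree contains a full binary branching of depth $N$: at $N$ successive scales $\Omega_N$ splits into two well-separated sub-families of comparable size. Given such an $\Omega_N$ one runs the Besicovitch--Perron iteration inside $\mathcal{R}_{\Omega_N}$ --- at each of the $N$ stages, slide and overlap the current rectangles along the two branch directions so that their union shrinks by a definite amount while remaining a union of rectangles with angles in $\Omega_N$. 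After $N$ stages one is left with a finite family $\{R_i\}\subset\mathcal{R}_{\Omega_N}\subset\mathcal{R}_\Omega$ whose translates $TR_i$ still cover essentially their original footprint, so that $|\bigcup_i TR_i|\ge c_N|\bigcup_i R_i|$ with $c_N\to\infty$ as $N\to\infty$. This is a Kakeya blow with $\mathcal{R}_\Omega$; then, exactly as in the discussion following Theorem \ref{THMKB} --- set $E=\bigcup_i R_i$ and note $\bigcup_i TR_i\subset\{M_\Omega\mathbb{1}_E\ge \tfrac12\}$ because $R_i\cup TR_i$ is again a rectangle of $\mathcal{R}_\Omega$ of twice the length --- one concludes $\|M_\Omega\|_{L^p\to L^p}=\infty$ for every $p<\infty$.

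The main obstacle is the negative direction, and within it the combinatorial lemma: converting ``not finitely lacunary'' into the existence of arbitrarily deep binary-branching finite subsets requires the right inductive description of finite lacunarity (the tree formalism), and this is where the genuine work sits. A secondary difficulty is the bookkeeping in the iteration --- one must verify that at each Perron stage the overlap gained among the rectangles and the spreading of their translates are both governed by the same separation-plus-balance property of the branching, so that the gains compound over the $N$ stages rather than degrade; keeping the two branch directions quantitatively separated at \emph{every} scale is precisely what makes this possible. Granting these, the passage $N\to\infty$ and the reduction to the unboundedness of $M_\Omega$ are routine in view of the Introduction.
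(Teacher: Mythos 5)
First, a point of comparison: the paper does not prove this statement. Theorem \ref{T0} is quoted from Bateman's work \cite{BATEMAN} (the positive half going back to Nagel--Stein--Wainger \cite{NSW}, Sj\"ogren--Sj\"olin and Alfonseca--Soria--Vargas), so there is no internal proof to measure your outline against; it has to be judged against the actual literature. Your account of the positive half is a fair description of the known strategy (angular decomposition for a single lacunary sequence, induction on the order, subadditivity for finite unions), although the phrase ``composing the two bounds via the relevant vector-valued inequalities'' hides the real content: one needs the geometric covering fact that a rectangle whose direction lies in a gap $(a,b)$ of the skeleton $L$ is contained in a bounded-eccentricity dilate of a rectangle with direction $a$, and the constant-tracking through the induction is exactly what the almost-orthogonality principle of Alfonseca--Soria--Vargas was designed to handle. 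For a fixed finite order this half can nevertheless be pushed through along the lines you indicate.

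The genuine gap is in the negative half, precisely at the step you defer as ``bookkeeping''. Two things are missing. First, the combinatorial lemma converting ``not finitely lacunary'' into ``for every $N$ there is a finite subset whose direction tree contains a complete binary branching of depth $N$'' is the main structural result of \cite{BATEMAN}; it requires a Ramsey-type argument on the tree encoding and is not a routine unwinding of the recursive definition in the Introduction. Second, and more seriously, the claim that one can run the classical Besicovitch--Perron sliding iteration on such a subset with area gains compounding over the $N$ stages is exactly where the deterministic approach breaks down in general: the Perron-tree bisection scheme needs quantitative control on the ratios of consecutive angular gaps --- this is precisely the finiteness of the Perron factor $G(\Omega)$ in Theorem \ref{C2T1}, and the reason the present paper introduces the Perron capacity at all --- whereas a set with deep binary branching need not satisfy any such ratio condition, since the two branches at a node may have wildly unbalanced angular extents. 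Bateman and Bateman--Katz \cite{BATEMANKATZ} circumvent this by replacing the deterministic sliding with a probabilistic construction (random selection of directions via percolation on the tree), estimating the expected measures of $\bigcup_i R_i$ and $\bigcup_i TR_i$ separately. Without that idea, or a proof that the branching can always be chosen balanced, your iteration does not close. The final reduction from the Kakeya blow to $\|M_\Omega\|_p=\infty$ is correct and matches the discussion following Theorem \ref{THMKB}.
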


Let us define the notion of \textit{finite lacunarity} following a nice presentation made by Kroc and Pramanik \cite{KP2}: we start by defining the notion of \textit{lacunary sequence} and then the notion of \textit{lacunary set of finite order}. We say that a sequence of real numbers $L=\left\{\ell_k : k \geq 1\right\}$ is a lacunary sequence converging to $\ell \in \mathbb{R}$ when there exists $0 < \lambda < 1$ such that $$|\ell-\ell_{k+1}| \leq \lambda|\ell - \ell_k|$$ for any $k$. For example the sequences $\left\{\frac{1}{2^k} : k \geq 2\right\}$ and $\left\{ \frac{1}{k!} : k\geq 4\right\}$ are lacunary. We define now by induction the notion of lacunary set of finite order.

\begin{defn}[Lacunary set of finite order]
A lacunary set of order $0$ in $\mathbb{R}$ is a set which is either empty or a singleton. Recursively, for $N \in \mathbb{N}^*$, we say that a set $\Omega$ included in $\mathbb{R}$ is a lacunary set of order at most $N+1$~---~and write $\Omega \in \Lambda(N+1)$~---~ wether there exists a lacunary sequence $L$ with the following properties : for any $a,b \in L$ such that $a < b$ and $(a,b) \cap L = \emptyset$, the set $\Omega \cap (a,b)$ is a lacunary set of order at most $N$ \textit{i.e.} $\Omega \cap (a,b) \in \Lambda(N)$.
\end{defn}

For example the set $$\Omega := \left\{ \frac{\pi}{2^k} + \frac{\pi}{4^l} : k,l \in \mathbb{N}, l \leq k \right\} $$ is a lacunary set of order $2$. In this case, observe that the set $\Omega$ cannot be re-written as a monotone sequence, since it has several points of accumulation. We can finally give a definition of a finitely lacunary set.

\begin{defn}[Finitely lacunary set]
A set $\Omega$ in $[0,\pi)$ is said to be \textit{finitely lacunary} if there exists a finite number of set $\Omega_1, \dots, \Omega_M$ which are lacunary of finite order such that $$ \Omega \subset \bigcup_{k \leq M } \Omega_k.$$
\end{defn}

\section{Can we apply Bateman's Theorem ?}

A classic example of set which is known to be not finitely lacunary is the set $$\Omega_{\text{lin}} = \left\{ \frac{\pi}{n} : n \in \mathbb{N}^* \right\}.$$ Indeed, the classic construction of \textit{Perron trees} shows that it is possible to make a Kakeya blow with the family $\mathcal{R}_{\Omega_{\text{lin}}}$: hence an application of Bateman's Theorem implies that $\Omega_{\text{lin}}$ is not finitely lacunary. The second classic example of set which is known to be finitely lacunary is the set $$\Omega_{\text{lac}} = \left\{ \frac{\pi}{2^n} : n \in \mathbb{N}^* \right\}.$$ One can see that the set $\Omega_{\text{lac}}$ is finitely lacunary by definition and it was in \cite{NSW} that Nagel, Stein and Waigner proved that the maximal operator $M_{\Omega_{\text{lac}}}$ is bounded on $L^p(\mathbb{R}^2)$ for any $1 < p < \infty$ ; their proof relies on Fourier analysis. \textit{Also, let us say that the comprehension of the sets $\Omega_{\text{lin}}$ and $\Omega_{\text{lac}}$ is important because they are the most simple (and smallest) cases of infinite sets which yield maximal operator having different boundedness properties.}

However, even if Bateman's Theorem is extremely satisfying, it appears to be difficult to decide if a given set $\Omega$ is finitely lacunary or not. The most striking example was raised by Stokolos : at the present time, it is not known if the set $$ \Omega_{\sin,\text{lac}} := \left\{ \frac{\pi \sin(n)}{2^n} : n \in \mathbb{N}^* \right\}$$ is finitely lacunary or not.  The main problem of this set of directions is that we have a very poor control on the deterministic sequence $\left\{ \sin(n) : n \geq 1 \right\}$ and that initially, the set $\Omega_{\text{lac}} $ is finitely lacunary: hence, the perturbations are quite difficult to handle. In \cite{DANIELLOGAUVANMOONENS}, with D'Aniello and Moonens, we were able to show that the following set  $$ \Omega_{\sin,\text{lin}} := \left\{ \frac{\pi \sin(n)}{n} : n \in \mathbb{N}^* \right\}$$ is not finitely lacunary (this was also a set considered by Stokolos). More precisely, we studied the maximal operator $M_{\Omega_{\sin,\text{lin}}}$ associated and, improving on concrete techniques, we proved that this operator is not bounded on $L^p(\mathbb{R}^2)$ for any $1 < p < \infty$: the heart of the method relied on the introduction of the \textit{Perron's capacity} of a set of directions. We need some notations to recall our results: given an infinite set of directions $\Omega \subset \mathbb{S}^1$ whose only point of accumulation is $0$ and we denote, for notational convenience, by $\Omega^{-1}$
the set $\left\{ \frac{\pi}{u} : u \in \Omega \right\}$, that is $$ \Omega^{-1} := \frac{\pi}{\Omega}$$ and we order $\Omega^{-1}$ as a strictly increasing sequence $\left\{  u_k:k\in\mathbb{N}^*  \right\}$. With those notations, we define the \textit{Perron's factor} of $\Omega$ as $$G(\Omega) := \sup_{\begin{subarray}{c}k \geq 1\\ l \leq k\end{subarray}} \left( \frac{u_{k+2l} - u_{k+l}}{u_{k+l} - u_{k}} + \frac{u_{k+l} - u_{k}}{u_{k+2l} - u_{k+l}} \right).$$ In \cite{HareR}, Hare and Ronning proved the following Theorem.

\begin{thm}[Hare and Ronning]\label{C2T1}
If we have $G(\Omega) < \infty$ then it is possible to make a Kakeya blow with the family $\mathcal{R}_\Omega$.
\end{thm}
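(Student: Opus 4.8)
The plan is to construct, for any $\Omega$ with $G(\Omega)<\infty$, a Perron tree adapted to $\Omega$. I would argue directly rather than deduce the statement from Bateman's Theorem~\ref{T0}, since that route would require first proving that $G(\Omega)<\infty$ precludes finite lacunarity, and in any case the ``not finitely lacunary $\Rightarrow$ blow'' half of Bateman's Theorem is itself proved by a Perron-type construction. Via the routine passage between triangles and rectangles — each triangle with an edge in a direction of $\Omega$ is interchangeable, up to absolute constants in measure and in the length-extension $T$, with rectangles of $\mathcal{R}_\Omega$, and conversely — it suffices to produce, for every $A\gg1$, a finite family $\{\Delta_i\}$ of triangles having an edge in a direction of $\Omega$ with $\left|\bigcup_i\Delta_i\right|\le A^{-1}\left|\bigcup_i T\Delta_i\right|$.

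First I would fix the slopes. Enumerate $\Omega^{-1}=\{u_k:k\ge1\}$ increasingly, fix $n$ (eventually $n\to\infty$), set $N=2^n$, and take the $N+1$ consecutive terms $u_N,u_{N+1},\dots,u_{2N}$, using the directions $\pi/u_{N+j}$ for $0\le j\le N$ as the edge directions of an $n$-level binary Perron tree. Starting at index $N$ guarantees that every dyadic bisection of the index block $[N,2N]$ has the form $[k,k+2l]\ni k+l$ with $k\ge N$ and $l\le 2^{n-1}\le N\le k$; in particular $l\le k$, so the very definition of $G(\Omega)$ gives
$$\frac{1}{G(\Omega)}\le\frac{u_{k+2l}-u_{k+l}}{u_{k+l}-u_{k}}\le G(\Omega)$$
at every node of the binary tree, with a constant independent of $n$. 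Thus the chosen slopes are $G(\Omega)$-balanced at all $n$ scales at once; this scale-uniform comparability of adjacent gaps is precisely what $G(\Omega)<\infty$ buys, and precisely what a genuinely lacunary set — where two adjacent equal-length gaps differ by a factor growing with the scale — lacks.

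Next I would run the recursive sprouting. The one-step lemma is: given a triangle $\Delta$ with base on a fixed line and edge slopes $a<b$, and a middle slope $c\in(a,b)$, split $\Delta$ along the cevian of slope $c$ into $\Delta'$ (slopes $a,c$) and $\Delta''$ (slopes $c,b$), then translate $\Delta''$ along the base line; optimising the translation parameter, the area of $\Delta'\cup(\Delta''+t)$ can be lowered to $(1+\eta)^{-1}|\Delta|$ with $\eta=\eta\!\left(\tfrac{c-a}{b-c}\right)>0$ depending only on the gap ratio, while the length-extensions of the two pieces still cover a translate of $T\Delta$ up to a set whose measure is controlled by that same ratio. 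Since all our gap ratios lie in $\left[G(\Omega)^{-1},G(\Omega)\right]$, at every node $\eta\ge\eta_0=\eta_0(G(\Omega))>0$. Iterating through the $n$ levels, and summing the losses of reach — which, by the same uniform control, total at most a fixed fraction of $\left|\bigcup_i T\Delta_i\right|$ — I expect the final union of the $N=2^n$ triangles to have measure $\le C(G(\Omega))\,n^{-1}\left|\bigcup_i T\Delta_i\right|$. Taking $n$ large then provides the required family with $A\sim n/C(G(\Omega))\to\infty$; and, exactly as recalled in the Introduction for $M_{\mathbb{S}^1}$, producing such families for every $A$ is equivalent to $M_\Omega$ being unbounded on every $L^p$, $p<\infty$.

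The main obstacle is the bookkeeping in the area estimate: one must verify that replacing exact bisection by $G(\Omega)$-balanced splitting still yields the $n^{-1}$ decay, i.e. that the per-step gains $\eta_0(G(\Omega))$ accumulate additively over the $n$ levels — giving area $\lesssim 1/n$ — rather than the distortion $G(\Omega)$ entering as the base of an exponential, which would give no decay at all. Concretely, one has to control how the overlap created by sprouting at one scale is positioned relative to the cevians used at the next scale; the classical computation for $\Omega_{\mathrm{lin}}$ (where the $u_k$ are exactly $1,2,3,\dots$ and all midpoints are exact) does this, and the work is to render every step stable under a bounded multiplicative perturbation at each node. A secondary, genuinely routine point is to carry out the triangle--rectangle passage so that the extension $T$ of Theorem~\ref{THMKB} corresponds to the reach preserved in the construction.
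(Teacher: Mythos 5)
The paper does not actually prove this theorem: it is imported from Hare and R\"onning \cite{HareR} and used as a black box, so there is no internal argument to compare yours against line by line. That said, your strategy coincides with the one in that reference — a generalized Perron tree whose bisection ratios are controlled by $G(\Omega)$ — and you handle the one genuinely $\Omega$-specific point correctly: choosing the block $u_N,\dots,u_{2N}$ with $N=2^n$ guarantees that every dyadic node $[k,k+2l]$ of the tree has $l\le 2^{n-1}\le N\le k$, so the supremum defining $G(\Omega)$ (which is restricted to $l\le k$) really does apply at every node and yields gap ratios in $\left[G(\Omega)^{-1},G(\Omega)\right]$ uniformly in $n$. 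That is exactly why the Perron factor is defined with that constraint, and identifying it is the right first move.

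What the proposal does not do is prove the theorem. The entire quantitative content — that a triangle cut along cevians into $2^n$ pieces whose dyadic bisection ratios are merely $C$-balanced, rather than exact halves, can still be translated along the base line so that the union has measure $O_C(1/n)$ times that of the union of the extended pieces — is asserted (``I expect\dots'') and explicitly flagged as ``the main obstacle'' rather than established. This is not a routine verification: as you yourself observe, the danger is that the distortion enters multiplicatively at each of the $n$ levels, and ruling that out requires redoing the Perron--Schoenberg bookkeeping of where the overlaps and the spare triangles land after each round of translations, now with unequal bases. That computation is precisely the content of the Hare--R\"onning lemma on generalized Perron trees; without it, what you have is an accurate road map with the correct entry point, not a proof. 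The peripheral steps (the triangle-to-rectangle passage and deducing $\|M_\Omega\|_p=\infty$ from the blow) are indeed routine and match the reduction recalled in the introduction.
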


It turns out that it is difficult to compute the Perron factor of the set $$\Omega_{\sin,\text{lin}} =  \left\{ \frac{\pi \sin(n)}{n} : n \geq 1 \right\}$$ since the oscillation of the cosinus prevent us to obtain a good description of the increasing sequence $\left\{  u_k:k\in\mathbb{N}^*  \right\}$ associated to $\Omega_{\sin,\text{lin}}$. Based on a careful read of the proof of Theorem \ref{C2T1}, for an arbitrary set of directions $\Omega$ included in $\mathbb{S}^1$, we define its \textit{Perron's capacity} as $$P(\Omega) :=\liminf_{N \to\infty} \inf_{ \begin{subarray}{c}U \subset \Omega^{-1} \\ \#U = 2^N \end{subarray}}  G(\Omega) \in [2,\infty]$$ where as before $$G(\Omega) = \sup_{\begin{subarray}{c}k,l \geq 1\\ k+2l \leq 2^N\end{subarray}} \left( \frac{u_{k+2l} - u_{k+l}}{u_{k+l} - u_{k}} + \frac{u_{k+l} - u_{k}}{u_{k+2l} - u_{k+l}} \right)$$ if $U = \{ u_1 < \dots < u_{2^N} \}$. In \cite{DANIELLOGAUVANMOONENS}, we proved the following (in contrast with Hare and Ronning Theorem, we do not assume that the set $\Omega$ is ordered):

\begin{thm}[D'Aniello, G. and Moonens]\label{T6}
For any set of directions $\Omega$, if we have $$P(\Omega) < \infty$$ then it is possible to make a Kakeya blow with the family $\mathcal{R}_\Omega$. In particular, for any $p < \infty$, one has $ \left\|M_\Omega \right\|_p = \infty$.
\end{thm}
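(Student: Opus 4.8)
The plan is to reduce the statement to the quantitative Perron tree mechanism that already underlies Theorem \ref{C2T1}, run on a single finite scale. First I would unwind the hypothesis $P(\Omega) < \infty$: by the very definition of the outer $\liminf$ and the inner $\inf$, there is a constant $C < \infty$ such that for infinitely many integers $N$ one can choose a \emph{finite} subset $U_N = \{ u_1 < u_2 < \dots < u_{2^N} \} \subset \Omega^{-1}$ with
$$ \frac{u_{k+2l} - u_{k+l}}{u_{k+l} - u_k} + \frac{u_{k+l} - u_k}{u_{k+2l} - u_{k+l}} \le C \qquad \text{for all } k, l \ge 1 \text{ with } k + 2l \le 2^N . $$
Set $\Theta_N := \{ \pi / u : u \in U_N \} \subset \Omega$. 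Since a finite set is trivially orderable, $\Theta_N$ is an honest finite set of directions whatever the accumulation structure of $\Omega$ — this is precisely why $P(\Omega)$, and not $G(\Omega)$, is the quantity to use, and why no ordering hypothesis on $\Omega$ enters. It therefore suffices to prove that for every $A \gg 1$ there is an admissible $N$ together with a finite family of rectangles in $\mathcal{R}_{\Theta_N} \subset \mathcal{R}_\Omega$ realizing a Kakeya blow of factor $A$.

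For this I would reopen the proof of Theorem \ref{C2T1} in \cite{HareR} and isolate its finitary content. Given a target $A$, that proof is a Perron tree argument: it fixes a number $n = n(A, C)$ of dyadic stages, forms a fan of $2^n$ triangles whose slanted sides realize the directions attached to the first $2^n$ terms of the ordered set, and compresses the fan by the classical iterated slide-and-overlap procedure; the only property of the sequence it ever consults is the Perron-factor bound for indices $k, l$ with $k + 2l \le 2^n$. Indeed, read with $l = 2^{m-1}$, that bound says exactly that every dyadic block of $2^m$ consecutive terms splits into two halves of comparable length, with ratio in $[C^{-1}, C]$; this scale-uniform quasi-lacunarity is what lets each of the $n$ compression stages capture a fixed proportion $\theta = \theta(C) > 0$ of a block's overhang, so that the union $E_n$ of the translated triangles obeys $|E_n| \le \varphi_C(n) \, |\Delta|$ with $\varphi_C(n) \to 0$ as $n \to \infty$, while the far translates of the rectangles circumscribing the triangles still cover a set of measure $\gtrsim |\Delta|$. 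Fixing $n$ large enough that the resulting blow exceeds $A$ and then any admissible $N \ge n$, one runs this very construction verbatim on $U_N$ — which supplies $2^N \ge 2^n$ terms and the required bound over the range $k + 2l \le 2^n$ — and it returns a finite family of rectangles with directions in $\Theta_N \subset \Omega$ and blow-up at least $A$.

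Finally I would deduce the $L^p$ statement exactly as recalled after Theorem \ref{THMKB}: given such a family $\{ R_i : i \in I \} \subset \mathcal{R}_\Omega$, set $E := \bigcup_{i} R_i$; since $R_i \cup T R_i$ is again a rectangle — of the same orientation as $R_i$, hence with angle $\omega_{R_i} \in \Omega$, and of area $2|R_i|$, of which $E$ contains the half $R_i$ — the average of $\mathbb{1}_E$ over $R_i \cup T R_i$ is at least $\tfrac12$, so $\bigcup_i T R_i \subset \{ M_\Omega \mathbb{1}_E \ge \tfrac12 \}$ and therefore $| \{ M_\Omega \mathbb{1}_E \ge \tfrac12 \} | \ge A |E|$. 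If $M_\Omega$ were bounded on $L^p$ for some $p < \infty$, Chebyshev's inequality would then give $A |E| \le 2^p \| M_\Omega \|_p^p \, |E|$, forcing $A \le 2^p \| M_\Omega \|_p^p$ for every $A$, which is absurd; hence $\| M_\Omega \|_p = \infty$ for all $p < \infty$.

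The step I expect to be the real work is the middle one: extracting from \cite{HareR} — or, if one prefers a self-contained argument, reproving from scratch — the precise assertion that the Perron compression of a $2^n$-triangle fan which is merely \emph{$C$-quasi-lacunary at every dyadic scale}, rather than exactly equally spaced, still drives the area ratio $\varphi_C(n)$ to $0$, and hence the blow-up to infinity, as $n \to \infty$. The delicate bookkeeping is to track how the per-stage gain $\theta(C)$ deteriorates as $C$ grows while confirming that the $n$-fold product $\varphi_C(n)$ nonetheless tends to $0$, and to check that replacing the triangles by circumscribed rectangles and applying the shift $T$ of Theorem \ref{THMKB} only costs bounded multiplicative constants. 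Everything else — the unwinding of $P(\Omega) < \infty$ and the maximal-function conclusion — is routine.
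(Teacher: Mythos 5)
Your proposal is correct and follows essentially the route the paper itself indicates for Theorem \ref{T6}: the result is imported from \cite{DANIELLOGAUVANMOONENS}, where the Perron capacity is defined precisely so that the finitary core of the Hare--R\"onning Perron-tree compression (Theorem \ref{C2T1}) can be run on a single extracted block $U_N$ of cardinality $2^N$, exactly as you describe. Your deduction of $\left\|M_\Omega\right\|_p = \infty$ from the Kakeya blow is likewise the standard argument the paper recalls after Theorem \ref{THMKB}.
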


Loosely speaking, if $P(\Omega) < \infty$ then the set $\Omega$ contains arbitrary large set which are (more or less) uniformly distributed and this geometric pattern prevents the set $\Omega$ to be finitely lacunary. The advantage of Theorem \ref{T6} is that it allows us to make \textit{effective} computation. However, as mentioned earlier, the following case is still unsettled.

\begin{ques}
Is the following set of direction $$ \Omega_{\sin,\text{lac}} := \left\{ \frac{\pi\sin(n)}{2^n} : n \in \mathbb{N}^* \right\}$$ finitely lacunary or not ?
\end{ques}

\section{Results}

Our result concerns random sets of directions which are meant to give a \textit{generic} comprehension of the two classic examples $\Omega_{\text{lin}}$ and $\Omega_{\text{lac}}$ when they are randomly perturbated. Precisely we consider the following random sets of slopes $$\Omega_{\text{rand},\text{lin}} := \left\{ \frac{\pi X_k}{k} : k \geq 1 \right\} $$ and $$\Omega_{\text{rand},\text{lac}} := \left\{ \frac{\pi X_k}{2^k} : k \geq 1 \right\} $$ where $\left\{ X_k : k \geq 1 \right\}$ are random variables uniformly distributed in $(0,1)$ and independent. To begin with, we prove the following Theorem.

\begin{thm}\label{T1}
The Perron's capacities of $\Omega_{\text{rand},\text{lin}}$ is finite almost surely \textit{i.e.} we have almoste surely $$P(\Omega_{\text{rand},\text{lin}}) < \infty.$$
\end{thm}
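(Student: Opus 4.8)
The plan is as follows. Since $\Omega_{\text{rand},\text{lin}}^{-1}=\pi/\Omega_{\text{rand},\text{lin}}=\{k/X_k:k\geq 1\}$, it suffices to prove that, almost surely, for every $N\geq 1$ there is a subset $U\subset\{k/X_k:k\geq 1\}$ with $\#U=2^N$ and $G(U)\leq 6$; this forces $P(\Omega_{\text{rand},\text{lin}})\leq 6<\infty$. I would start from the elementary remark that if $U=\{u_1<\dots<u_m\}$ has all its consecutive gaps $u_{i+1}-u_i$ in a fixed interval $(a,b)$, then for every admissible pair $(k,l)$ both $u_{k+2l}-u_{k+l}$ and $u_{k+l}-u_k$ are sums of $l$ such gaps, hence lie in $(la,lb)$, so every ratio occurring in $G(U)$ lies in $(a/b,b/a)$ and $G(U)\leq 2b/a$. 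Thus it is enough to produce, for every $N$, a set of $2^N$ numbers of the form $k/X_k$ whose consecutive gaps are pinched between two fixed positive constants.

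For this, set $I_j:=[j,j+1)$ for $j\geq 1$ and call $I_j$ \emph{occupied} if $k/X_k\in I_j$ for some $k$. If $I_M,I_{M+1},\dots,I_{M+2^{N+1}-1}$ are all occupied, then, picking for $s=0,\dots,2^N-1$ one point $u^{(s)}\in I_{M+2s}$ of the form $k_s/X_{k_s}$, one gets $2^N$ distinct numbers in $\{k/X_k:k\geq 1\}$ (the chosen intervals being pairwise disjoint) whose consecutive gaps lie in $(1,3)$, whence $G(\{u^{(0)},\dots,u^{(2^N-1)}\})\leq 6$. So the theorem reduces to the purely probabilistic statement: \emph{almost surely, for every $n\geq 1$ there exist $n$ consecutive occupied intervals.}

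I would prove this statement with the second Borel--Cantelli lemma, the whole point being to manufacture the needed independence. Fix $n$. For $k\leq j$ one has $\mathbb{P}(k/X_k\in I_j)=\mathbb{P}(X_k\in(k/(j+1),k/j])=\tfrac{k}{j(j+1)}$. Set $m_t:=3^t$, so that the index ranges $[m_t/2,m_t)$ are pairwise disjoint. For $t$ with $m_t\geq 8n$, partition $[m_t/2,m_t)\cap\mathbb{Z}$ into $n$ pairwise disjoint blocks $R_t^{(0)},\dots,R_t^{(n-1)}$, each of cardinality at least $m_t/(8n)$, and declare $R_t^{(i)}$ responsible for the interval $I_{m_t+i}$. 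Since $\tfrac{k}{(m_t+i)(m_t+i+1)}\geq\tfrac{1}{8m_t}$ for $k\in R_t^{(i)}$, the probability that no $k\in R_t^{(i)}$ lands in $I_{m_t+i}$ is at most $(1-\tfrac{1}{8m_t})^{m_t/(8n)}\leq e^{-1/(64n)}$; hence each of $I_{m_t},\dots,I_{m_t+n-1}$ is occupied by a witness from its own block with probability at least $q_n:=1-e^{-1/(64n)}>0$, and, the blocks $R_t^{(i)}$ being disjoint, these $n$ events are independent, so the event $E_t$ that $I_{m_t},\dots,I_{m_t+n-1}$ are all occupied satisfies $\mathbb{P}(E_t)\geq q_n^{\,n}>0$. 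Finally $E_t$ depends only on $\{X_k:k\in[m_t/2,m_t)\}$, so the $E_t$ (for $t$ large) are independent and $\sum_t\mathbb{P}(E_t)=\infty$; the second Borel--Cantelli lemma yields that almost surely infinitely many $E_t$ occur, in particular at least one, which produces $n$ consecutive occupied intervals. Intersecting over $n\geq 1$ gives the displayed statement, hence the theorem.

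The step I expect to be the genuine obstacle is precisely this creation of independence. The events ``$I_j$ occupied'' are negatively correlated for neighbouring $j$ (a single value $k/X_k$ occupies only one $I_j$), so one cannot simply multiply occupancy probabilities; moreover ``$I_j$ occupied'' depends on all the $X_k$ with $k\leq j$, so nearby windows share most of their randomness. The two devices above are meant to bypass this: assigning each of the $n$ intervals in a window its own disjoint block of indices restores independence inside a window, at the cost of only a positive (not nearly-$1$) probability $q_n^{\,n}$, while the geometric spacing $m_t=3^t$ makes distinct windows use disjoint families of variables. It then remains to check routine points: that $\Omega_{\text{rand},\text{lin}}^{-1}$ really is $\{k/X_k:k\geq 1\}$ with these values almost surely pairwise distinct, that the chosen $2^N$ numbers are distinct because they lie in pairwise disjoint intervals, and that discarding the finitely many small scales $t$ with $m_t<8n$ is harmless for the Borel--Cantelli conclusion.
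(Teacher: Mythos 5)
Your proof is correct, but it reaches the conclusion by a genuinely different mechanism than the paper. The paper's proof of Theorem \ref{T1} conditions on the \emph{values} of the random variables along a homogeneous progression of \emph{indices}: it introduces the set $E_N=\{k:|X_k-1|\le 2^{-N}\}$, shows via Borel--Cantelli (applied to disjoint homogeneous sets $H_{a_i,N}=\{a_i,2a_i,\dots,2^Na_i\}$) that $E_N$ almost surely contains some $H_{a,N}$, and then invokes a perturbation-stability lemma (Proposition \ref{P5}) saying that a multiplicative $O(2^{-N})$-perturbation of a homogeneous set still has Perron factor $<6$; each trial there has the tiny probability $2^{-N2^N}$, which is harmless since only divergence of the sum is needed. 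You instead condition on where the \emph{points} $k/X_k$ land: you ask for a run of consecutive occupied unit intervals, restore independence by assigning each target interval its own disjoint block of indices $R_t^{(i)}\subset[m_t/2,m_t)$, and decouple the windows by the geometric spacing $m_t=3^t$. This replaces Proposition \ref{P5} by the elementary ``pinched consecutive gaps'' bound (which is essentially the paper's Claim \ref{P2}, used there for the lacunary case), and it gets a per-trial probability $q_n^{\,n}$ bounded below by a constant rather than a super-exponentially small one. In effect you have transported the paper's proof of Theorem \ref{T2} to the linear setting, at the cost of the more delicate independence bookkeeping you correctly identify as the crux; the paper's route buys a cleaner independence structure (disjoint homogeneous index sets) in exchange for needing the perturbation lemma. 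All the steps you flag as routine do check out: $\Omega^{-1}_{\text{rand},\text{lin}}=\{k/X_k\}$, the chosen points are distinct because they lie in pairwise disjoint intervals, and dropping the finitely many scales with $m_t<8n$ does not affect the divergence of $\sum_t\mathbb{P}(E_t)$.
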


In some sense, Theorem \ref{T1} means that if a set $\Omega$ presents structured patterns - like large uniformly distributed sequence - then a small perturbation of $\Omega$ will still exhibit those patterns. The second result reads as follow.

\begin{thm}\label{T2}
The Perron's capacities of $\Omega_{\text{rand},\text{lac}}$ is finite almost surely \textit{i.e.} we have almoste surely $$P(\Omega_{\text{rand},\text{lac}}) < \infty.$$
\end{thm}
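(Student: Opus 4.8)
The plan is to prove Theorem~\ref{T2} by the scheme that also yields Theorem~\ref{T1}: to show that, almost surely, there is an absolute constant $C$ and infinitely many integers $N$ for which $\Omega_{\text{rand},\text{lac}}^{-1}=\{2^k/X_k:k\ge1\}$ contains a subset $U$ of cardinality $2^N$ with $G(U)\le C$. By the very definition of the Perron capacity this forces $P(\Omega_{\text{rand},\text{lac}})\le C<\infty$; Theorem~\ref{T6} then gives a Kakeya blow with $\mathcal{R}_{\Omega_{\text{rand},\text{lac}}}$ and the unboundedness of $M_{\Omega_{\text{rand},\text{lac}}}$ on $L^p(\mathbb{R}^2)$ for every $1<p<\infty$.

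The first, elementary, step is to record what makes a subset \emph{good}. If the ordered elements $\tilde u_1<\cdots<\tilde u_{2^N}$ of $U$ have all their consecutive gaps $\tilde u_{i+1}-\tilde u_i$ inside a fixed window $[\delta,K\delta]$, then any sum of $l$ consecutive gaps lies in $[l\delta,Kl\delta]$, so every quantity $\frac{\tilde u_{k+2l}-\tilde u_{k+l}}{\tilde u_{k+l}-\tilde u_k}$ appearing in the Perron factor is pinched between $K^{-1}$ and $K$, and hence $G(U)\le K+K^{-1}$. In particular it suffices to choose a target interval $[T,2T]$, partition it into $2^{N+1}$ consecutive sub-intervals of equal length $T/2^{N+1}$, and make sure each of them contains at least one point of $\Omega_{\text{rand},\text{lac}}^{-1}$: selecting one point out of every other sub-interval then produces a subset of size $2^N$ whose consecutive gaps all lie in $[T/2^{N+1},3T/2^{N+1}]$, so that $G(U)\le 10/3$. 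The whole problem is thus reduced to a covering statement for the random points $2^k/X_k$.

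That covering statement would be established probabilistically and then upgraded to an almost sure statement. For each $N$ one fixes a scale $T_N$ and a block $J_N$ of indices $k$, the blocks being pairwise disjoint so that the events
$$A_N=\bigl\{\text{every sub-interval of }[T_N,2T_N]\text{ is hit by some }2^k/X_k\text{ with }k\in J_N\bigr\}$$
are independent; it then suffices to have $\sum_N\mathbb{P}(A_N)=\infty$, since by the second Borel--Cantelli lemma infinitely many $A_N$ occur almost surely. For $\Omega_{\text{rand},\text{lin}}$ this is routine: among $k\in[T,2T]$ the variables $k/X_k$ put on the order of $T$ points into $[T,2T]$ with density of order one, so taking $T_N=2^{2N}$ and $J_N=[2^{2N},2^{2N+1}]\cap\mathbb{N}$ one checks, by a routine union bound together with the independence of the $X_k$, that $\mathbb{P}(A_N)\to1$.

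The essential new obstacle — and the step I expect to require the real work — is that $\Omega_{\text{rand},\text{lac}}^{-1}$ is \emph{sparse}: since $2^k/X_k>2^k$, at most $\log_2 T$ of its points lie below $T$, so a short target interval typically carries only about one of them and cannot be covered by $2^{N+1}$ nonempty cells in the naive way. Overcoming this is the heart of the argument: one must work at scales $T_N$ far larger than $2^N$ and exploit the heavy upper tail of $2^k/X_k$ (when $X_k$ is small, $2^k/X_k$ lands far above $2^k$), using \emph{all} the indices $k$ with $2^k\le T_N$ rather than only those with $2^k$ comparable to $T_N$, so as to populate a prescribed family of cells while keeping the geometry such that the resulting $2^N$ points still have consecutive gaps comparable up to an absolute constant. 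Making this construction explicit, and showing that the corresponding joint event on the small values of $X_1,\dots,X_{\lfloor\log_2 T_N\rfloor}$ still has probability bounded away from $0$ — together with arranging independence of the resulting events across $N$ — is the delicate part of the proof.
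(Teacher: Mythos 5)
Your overall strategy is the paper's: reduce to finding, for each $N$, a subset of $\Omega_{\text{rand},\text{lac}}^{-1}$ of size $2^N$ whose consecutive gaps lie in a window $[\delta,3\delta]$ (which pins $G(U)$ down to an absolute constant), and produce such subsets almost surely by a covering-plus-Borel--Cantelli argument. You have also correctly located the real difficulty (sparsity of $\{2^k/X_k\}$ below any scale $T$) and even the correct mechanism for overcoming it (the heavy upper tail of $2^k/X_k$, using all indices $k$ below the scale). But the proposal stops exactly there: the construction you yourself call ``the heart of the argument'' is announced, not carried out, so as it stands this is a plan rather than a proof. For the record, the missing step is short: take $I_d=[2^d,2^{d+1}]$ cut into $2^N$ equal cells $I_{d,l}=[2^d(1+\tfrac{l-1}{2^N}),2^d(1+\tfrac{l}{2^N})]$, and for each $l\le 2^N$ ask that the \emph{single} point of index $k=d-l$ land in the $l$-th cell; this happens precisely when $X_{d-l}\in[2^{-l}(1+\tfrac{l}{2^N})^{-1},\,2^{-l}(1+\tfrac{l-1}{2^N})^{-1}]$, an event of positive probability $p_{N,l}$ not depending on $d$, and since the indices $d-1,\dots,d-2^N$ are distinct the joint probability is $\eta_N=\prod_{l\le 2^N}p_{N,l}>0$.

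There is also a genuine flaw in your Borel--Cantelli bookkeeping. You propose one block $J_N$ per $N$ and invoke the second Borel--Cantelli lemma via $\sum_N\mathbb{P}(A_N)=\infty$. For the lacunary set this divergence fails: the covering probability of a single block at level $N$ is (at best) $\eta_N=\prod_{l\le 2^N}p_{N,l}$, which decays doubly exponentially in $N$ since $p_{N,l}\lesssim 2^{-l}2^{-N}$, so $\sum_N\eta_N<\infty$ and your sum does not diverge. The correct organization — the one in the paper — is to \emph{fix} $N$ and run Borel--Cantelli over infinitely many scales $d_s$ with $d_{s+1}-d_s>2^N+1$ (so that the index blocks $\{d_s-l:\,l\le 2^N\}$ are disjoint and the trials independent); each trial succeeds with the same probability $\eta_N>0$, hence $\sum_s\eta_N=\infty$ and some scale works almost surely. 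One then intersects the resulting countably many probability-one events over $N$. With that reorganization and the explicit cell events above, your outline becomes the paper's proof.
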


The proof of Theorems \ref{T1} and \ref{T2} relies on the possibility to compute effectively the Perron's capacity of the random sets $\Omega_{\text{rand},\text{lin}}$ and $\Omega_{\text{rand},\text{lac}}$.

\section{Proof of Theorem \ref{T1}}

We wish to prove that the Perron's capacity of $\Omega_{\text{rand},\text{lin}}$ is finite almost surely. We are simply going to prove that \textit{the set $\Omega^{-1}_{\text{rand},\text{lin}}$ contains small perturbation of arbitrarily long homogeneous sets.} We say that a set $H$ of the form $$H := H_{a,N} = \{ ka :1 \leq  k \leq 2^N \}$$ for some integer $a \in \mathbb{N}^*$ is an \textit{homogeneous set}. The following claim is easy.

\begin{claim}
For any $a,N \in \mathbb{N}$, one has $G(H_{a,N}) = 2$.
\end{claim}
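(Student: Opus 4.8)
The plan is a direct computation, so the proof will be short. First I would write the elements of $H_{a,N}$ in increasing order: since $a \geq 1$, multiplication by $a$ is order preserving, so the $j$-th smallest element of $H_{a,N} = \{a, 2a, \dots, 2^N a\}$ is exactly $u_j = ja$ for $1 \leq j \leq 2^N$. Then, fixing any admissible pair $(k,l)$ with $k, l \geq 1$ and $k + 2l \leq 2^N$, one computes
$$u_{k+2l} - u_{k+l} = \bigl((k+2l) - (k+l)\bigr)a = la \qquad \text{and} \qquad u_{k+l} - u_k = \bigl((k+l) - k\bigr)a = la,$$
both strictly positive. Hence the corresponding term in the definition of the Perron factor is
$$\frac{u_{k+2l} - u_{k+l}}{u_{k+l} - u_k} + \frac{u_{k+l} - u_k}{u_{k+2l} - u_{k+l}} = \frac{la}{la} + \frac{la}{la} = 2,$$
independently of $k$ and $l$. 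Taking the supremum over all admissible $(k,l)$ then gives $G(H_{a,N}) = 2$.

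For context, I would also recall that for any positive reals $A, B$ one has $\frac{A}{B} + \frac{B}{A} \geq 2$, with equality if and only if $A = B$; consequently $G(U) \geq 2$ for \emph{every} finite set $U$, which is exactly why the Perron's capacity is stated to take values in $[2, \infty]$. So the content of the claim is that homogeneous sets realize this absolute minimum — they are the "maximally uniformly distributed" finite configurations, since all of their consecutive gaps are equal.

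There is essentially no obstacle in this claim itself; the only subtlety worth flagging is that one needs $a \in \mathbb{N}^*$ so that $H_{a,N}$ genuinely consists of $2^N$ distinct points listed in the stated order. The real difficulty will come afterwards, in the proof of Theorem~\ref{T1}: one must show that $\Omega^{-1}_{\text{rand},\text{lin}}$ almost surely contains, for every $N$, a controlled perturbation of some $H_{a,N}$, and then show that $G$ degrades only mildly under such perturbations, so that it remains bounded by a constant independent of $N$; combined with the liminf in the definition of $P$, this will yield $P(\Omega_{\text{rand},\text{lin}}) < \infty$ almost surely.
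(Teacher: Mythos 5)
Your computation is correct and is precisely the argument the paper has in mind (it omits the proof as ``easy''): with $u_j = ja$ every difference $u_{k+2l}-u_{k+l}$ and $u_{k+l}-u_k$ equals $la$, so each term in the supremum is exactly $2$. Your side remarks — that $\frac{A}{B}+\frac{B}{A}\geq 2$ forces $G\geq 2$ in general, and that one needs $a\in\mathbb{N}^*$ for the points to be distinct — are accurate and consistent with the paper's framing.
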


We wish to perturb a little an homogeneous set $H$ into a set $H'$ such that the Perron's factor of $H'$ is still controlled. Precisely, fix any $a,N\in \mathbb{N}^*$ and let $\epsilon$ be an arbitrary function $$ \epsilon : H_{a,N} \rightarrow (0,\infty).$$ Define then the set $H_{a,N}(\epsilon)$ as $$H' := H_{a,N}(\epsilon) := \left\{ (1+\epsilon(l)) l : l \in H_{a,N} \right\}.$$ If the perturbation $\epsilon$ is small enough compared to the integer $N$, one can control uniformly $G(H')$.

\begin{prp}\label{P5}
With the previous notations, if we have $$2^N\|\epsilon\|_\infty \leq \frac{1}{2}$$ then we have $G(H_{a,N}(\epsilon)) < 6$.
\end{prp}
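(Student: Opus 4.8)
The plan is to make every quantity entering $G\bigl(H_{a,N}(\epsilon)\bigr)$ explicit and estimate it by hand; the one indispensable ingredient is that $\epsilon$ takes \emph{positive} values. Write $\epsilon_k := \epsilon(ka)$ for $1 \le k \le 2^N$, so that $0 < \epsilon_k \le \|\epsilon\|_\infty \le \tfrac{1}{2^{N+1}}$ and the points of $H' := H_{a,N}(\epsilon)$ are $u_k := (1+\epsilon_k)\,ka$. A first routine step is to check that the $u_k$ are already in increasing order: since $\epsilon_{k+1} > 0$ and $k\epsilon_k \le 2^N\|\epsilon\|_\infty \le \tfrac12$, one has $u_{k+1} - u_k = a\bigl(1 + (k+1)\epsilon_{k+1} - k\epsilon_k\bigr) \ge \tfrac a2 > 0$, so $u_1 < \dots < u_{2^N}$ and $(u_k)$ is precisely the enumeration used to compute $G(H')$.

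Next, fix admissible indices $k,l \ge 1$ with $k + 2l \le 2^N$ and expand the two consecutive gaps exactly:
$$u_{k+2l} - u_{k+l} = a\bigl(l + (k+2l)\epsilon_{k+2l} - (k+l)\epsilon_{k+l}\bigr), \qquad u_{k+l} - u_k = a\bigl(l + (k+l)\epsilon_{k+l} - k\epsilon_k\bigr).$$
In each of these the dominant term is $la$ and the perturbation is a difference of two nonnegative quantities, each at most $2^N\|\epsilon\|_\infty \le \tfrac12$ because $k, k+l, k+2l \le 2^N$. Dropping in each gap the nonnegative term that works in our favour and keeping only the other one, we get that \emph{both} gaps lie strictly between $a\bigl(l-\tfrac12\bigr)$ and $a\bigl(l+\tfrac12\bigr)$; in particular both are positive since $l \ge 1$. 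Therefore the ratio $\rho := (u_{k+2l}-u_{k+l})/(u_{k+l}-u_k)$ satisfies $\rho, \rho^{-1} \in \bigl[\tfrac{2l-1}{2l+1}, \tfrac{2l+1}{2l-1}\bigr]$.

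To finish, recall that $x \mapsto x + x^{-1}$ is invariant under $x \leftrightarrow x^{-1}$ and, being convex with minimum at $x = 1$, attains on any interval $[m, m^{-1}]$ with $0 < m \le 1$ its maximum $m + m^{-1}$ at the endpoints. Hence
$$\rho + \rho^{-1} \le \frac{2l+1}{2l-1} + \frac{2l-1}{2l+1} = 2 + \frac{4}{4l^2 - 1},$$
which decreases in $l \ge 1$ and is thus at most its value $\tfrac{10}{3}$, attained at $l = 1$. Taking the supremum over all admissible pairs $(k,l)$ gives $G\bigl(H_{a,N}(\epsilon)\bigr) \le \tfrac{10}{3} < 6$ (when $2^N < 3$ there are no admissible pairs and the statement is trivial).

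The only point that really requires care is the systematic exploitation of the sign of $\epsilon$: a sign-blind bound would only yield $\bigl|u_{k+l} - u_k - la\bigr| \le (2k+l)\|\epsilon\|_\infty$, and $(2k+l)\|\epsilon\|_\infty$ can be essentially $1$ (take $l = 1$ and $k$ close to $2^N$), which would wipe out the lower bound on the gaps and in fact make $G$ unbounded as $N \to \infty$. It is exactly to forbid this that the hypothesis asks for $\epsilon$ to be valued in $(0,\infty)$; with that in hand, the rest is the elementary monotonicity of $x \mapsto x + x^{-1}$ and of $l \mapsto 2 + 4(4l^2-1)^{-1}$.
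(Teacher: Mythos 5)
Your proof is correct and complete. The paper actually states Proposition \ref{P5} without proof, and your direct computation is exactly the natural argument one would supply: writing $u_k=(1+\epsilon_k)ka$, checking monotonicity, bounding each gap between $a(l-\tfrac12)$ and $a(l+\tfrac12)$ by exploiting the positivity of $\epsilon$ on one side and the hypothesis $2^N\|\epsilon\|_\infty\le\tfrac12$ on the other, and then using the monotonicity of $x\mapsto x+x^{-1}$. You even obtain the sharper uniform bound $10/3$, and your closing remark correctly identifies why the sign condition $\epsilon>0$ is indispensable rather than decorative.
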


We are now ready to prove Theorem \ref{T1}. We fix a large integer $N \in \mathbb{N}$ and consider the following set of indices $$E_N := \left\{ k \in \mathbb{N} : |X_k-1| \leq 2^{-N} \right\}.$$ In other words, an integer $k$ belongs to $E_N$ when $X_k$ is close to $1$ with precision $2^{-N}$. We claim that this random set $E_N$ contains almost surely large (with at least $2^N$ points) homogeneous sequences.

\begin{claim}
For any $N \geq 1$, the set $E_N$ contains an homogeneous set of cardinal $2^N$ almost surely.
\end{claim}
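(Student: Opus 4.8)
The plan is to translate the statement into an arithmetic-progression problem: a homogeneous set of cardinal $2^N$ contained in $E_N$ is exactly a choice of $a \in \mathbb{N}^*$ for which all of $a, 2a, \dots, 2^N a$ lie in $E_N$. Since the $X_k$ are independent, I would produce infinitely many values of $a$ whose associated blocks of indices $\{a, 2a, \dots, 2^N a\}$ are pairwise disjoint; for such values the corresponding events are independent and each has a fixed positive probability, so a Borel--Cantelli (or just an infinite-product) argument shows that at least one of them occurs almost surely.

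In detail, I would first note that for every index $j$ one has $\mathbb{P}(j \in E_N) = \mathbb{P}(X_j \ge 1 - 2^{-N}) = 2^{-N}$ (using $N \ge 1$, so $2^{-N} \le 1$), and that the events $\{j \in E_N\}$ are mutually independent since the $X_j$ are. Hence for a fixed $a \in \mathbb{N}^*$ the event $A_a := \{ ka \in E_N \text{ for all } 1 \le k \le 2^N\}$ is an intersection of $2^N$ independent events indexed by the $2^N$ distinct integers $a, 2a, \dots, 2^N a$, so $\mathbb{P}(A_a) = 2^{-N 2^N} =: p > 0$, and $A_a$ is measurable with respect to $X_a, X_{2a}, \dots, X_{2^N a}$ alone.

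Next I would fix a strictly increasing sequence of positive integers $(a_i)_{i \ge 1}$ with $a_{i+1} > 2^N a_i$ for every $i$ --- for instance $a_1 = 1$, $a_{i+1} = 2^N a_i + 1$. Then the blocks $B_i := \{a_i, 2a_i, \dots, 2^N a_i\}$ are pairwise disjoint, because every element of $B_i$ is at most $2^N a_i < a_{i+1} \le a_j = \min B_j$ for any $j > i$. Consequently the events $A_{a_i}$ (each depending only on $\{X_m : m \in B_i\}$) are mutually independent, with $\mathbb{P}(A_{a_i}) = p$ for all $i$. Since $\sum_i \mathbb{P}(A_{a_i}) = \infty$, the second Borel--Cantelli lemma (or directly $\mathbb{P}(\bigcap_i A_{a_i}^{c}) = \prod_i (1-p) = 0$) yields that almost surely $A_{a_i}$ holds for at least one $i$, i.e. $H_{a_i,N} \subset E_N$, which is the desired homogeneous set.

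There is no genuine obstacle here; the only point that needs attention is the choice of the $a_i$ making the blocks $B_i$ disjoint, which is precisely what converts ``each block falls inside $E_N$ with fixed positive probability'' into honest independence of the events, and thus lets us run Borel--Cantelli.
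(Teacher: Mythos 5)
Your argument is correct and is essentially identical to the paper's proof: both compute $\mathbb{P}(H_{a,N}\subset E_N)=2^{-N2^N}$ from independence and uniformity, choose a sequence $(a_i)$ making the blocks $\{a_i,2a_i,\dots,2^Na_i\}$ pairwise disjoint so the events become independent, and conclude by the second Borel--Cantelli lemma. The only cosmetic difference is your explicit choice $a_{i+1}=2^Na_i+1$ versus the paper's $a_i=2^{2N(i+1)}$.
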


\begin{figure}[h!]
\centering
\includegraphics[scale=0.7]{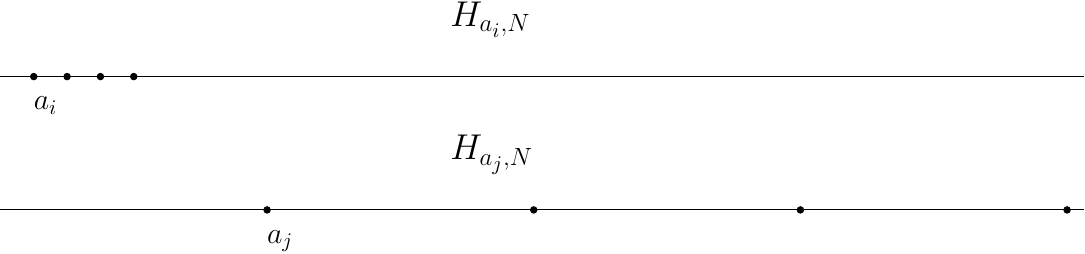}
\caption{We choose the sequence $\{ a_i : i \geq 1 \}$ such that $H_{a_i,N} \cap H_{a_j,N} = \emptyset$ for any $i \neq j$.}  
\end{figure}

\begin{proof}
Observe that for any $a \in \mathbb{N}^*$, the following probability $\mathbb{P}( H_{a,N} \subset E_N)$ is independent of $a$: indeed since the random variables $\left\{ X_k : k \geq 1 \right\}$ are independent and uniformly distributed, we have $$\mathbb{P}( H_{a,N} \subset E_N) = \prod_{k \in H_{a,N}}\mathbb{P}\left(|X_k-1| \leq 2^{-N}\right) = \frac{1}{2^{N2^N}} .$$ Hence we fix a sequence $\left\{ a_i \right\}_{i \geq 1}$ satisfying for any $i\neq j$ $$H_{a_i,N} \cap H_{a_j,N} = \emptyset.$$ For example, setting $a_i = 2^{2N(i+1)}$ works since we have $a_i 2^N < a_{i+1}$ for any $i \geq 1$. In particular this means that the events $$\{ \left(H_{a_i,N} \subset E_N\right) : i \geq 1 \}$$ are independent and since we have $$\sum_{i \geq 1} \mathbb{P}( H_{a_i,N} \subset E_N) = \infty $$ an application of Borel-Cantelli lemma yields the conclusion. 
\end{proof}

We can now conclude the proof : we define a perturbation $\epsilon$ for any $n \geq 1$ as $$1 + \epsilon(n) = X_n^{-1}.$$ We fix a large integer $N \gg 1$ and we know that almost surely there exists an integer $a \in \mathbb{N}^*$ such that $H_{a,N} \subset E_N$. Observe now that by definition one has the following inclusion $$H_{a,N}(\epsilon) \subset \Omega^{-1}_{\text{rand},\text{lin}}.$$ However since $H_{a,N} \subset E_N $ and that it is not difficult to see that we have $$ \|\epsilon_{|H_{a,N}} \|_\infty \lesssim 2^{-N}.$$ Indeed, for $k \in H_{a,N} \subset E_N$, we have $$\left|1 + \epsilon(k) \right| = \left| \frac{1}{1 +(X_k-1)} \right| \lesssim 1 + 2\left|X_k-1\right| \lesssim 1 + 2^{-N}.$$ It follows that we have $P(\Omega_{\text{rand},\text{lin}}) < 6$ almost surely applying Proposition \ref{P5}.

\section{Proof of Theorem \ref{T2}}

We wish to prove that the Perron's capacity of $\Omega_{\text{rand},\text{lac}}$ is finite almost surely. Observe that if $U$ is a set in $\mathbb{R}$ who is well distributed then one can control its Perron's capacity.

\begin{claim}\label{P2}
If we have $\delta > 0$ and a set $$U = \left\{u_1 < \dots < u_{2^N} \right\} $$ such that for any $1 \leq i \leq 2^N-1$ we have $$\delta \leq u_{i+1} - u_i \leq 3 \delta$$ then one has $G(U) \lesssim 1$.
\end{claim}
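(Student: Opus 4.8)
The plan is to bound each of the two summands defining $G(U)$ separately by an absolute constant, using only the hypothesis on consecutive gaps together with a telescoping identity. Fix indices $k,l \geq 1$ with $k+2l \leq 2^N$. Writing the two block increments as telescoping sums,
$$u_{k+l}-u_k = \sum_{i=k}^{k+l-1}(u_{i+1}-u_i), \qquad u_{k+2l}-u_{k+l} = \sum_{i=k+l}^{k+2l-1}(u_{i+1}-u_i),$$
each sum has exactly $l$ terms, and by hypothesis every one of these terms lies in $[\delta,3\delta]$. Hence
$$l\delta \leq u_{k+l}-u_k \leq 3l\delta \qquad\text{and}\qquad l\delta \leq u_{k+2l}-u_{k+l} \leq 3l\delta.$$

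Dividing these two chains of inequalities against each other gives
$$\frac{1}{3} \leq \frac{u_{k+2l}-u_{k+l}}{u_{k+l}-u_k} \leq 3,$$
and the same bounds hold for the reciprocal ratio. Consequently
$$\frac{u_{k+2l}-u_{k+l}}{u_{k+l}-u_k} + \frac{u_{k+l}-u_k}{u_{k+2l}-u_{k+l}} \leq 6$$
for every admissible pair $(k,l)$, so that $G(U) \leq 6$, which is precisely the asserted bound $G(U) \lesssim 1$ with an implied constant independent of $N$, $\delta$ and $U$.

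There is no genuine obstacle here: the only observation worth isolating is that the two-sided comparability of \emph{consecutive} gaps is automatically inherited, with the very same constant $3$, by \emph{blocks} of gaps of any common length $l$, and that is exactly the feature that forces the two ratios entering $G(U)$ to be bounded above and below. The bound being uniform in $N$ is what matters for feeding this claim into the estimate of $P(\Omega_{\text{rand},\text{lac}})$.
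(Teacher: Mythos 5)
Your proof is correct and follows essentially the same route as the paper: both arguments telescope the block increments $u_{k+l}-u_k$ and $u_{k+2l}-u_{k+l}$ into sums of $l$ consecutive gaps, each trapped in $[\delta,3\delta]$, so that every ratio appearing in $G(U)$ is bounded between $1/3$ and $3$. Your version merely makes the implied constants explicit ($G(U)\leq 6$), which is a harmless sharpening of the paper's $\simeq$ bookkeeping.
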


\begin{proof}
For any $i,j$ such that $i+j \leq 2^N-1$, one has $u_{i+j} - u_i \simeq j\delta$. Hence $i,j$ such that $i+2j \leq 2^{N-1}$, we have $$\frac{u_{i+2j} - u_{i+j}}{u_{i+j} - u_{i}} + \frac{u_{i+j} - u_{i}}{u_{i+2j} - u_{i+j}} \simeq \frac{2j\delta}{j\delta} + \frac{j\delta}{2j\delta} \simeq 1.$$ Hence we obtain $G(U) \lesssim 1$ as claimed
\end{proof}

We are going to prove the following proposition.

\begin{prp}\label{P3}
For any $N \geq 1$, there exists almost surely a scale $\delta > 0$ and a set $U \subset \Omega^{-1}_{\text{rand},\text{lac}}$ such that $$U := \left\{ u_1 < \dots < u_{2^{N-1}} \right\} $$ and for any $i \leq 2^{N-1}$ one has $$\delta < u_{i+1}-u_i < 3\delta.$$
\end{prp}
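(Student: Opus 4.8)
The plan is to use the explicit description $\Omega^{-1}_{\text{rand},\text{lac}} = \{\, 2^k/X_k : k \geq 1 \,\}$ and to manufacture the well‑spaced set $U$ at a large (random) scale $2^M$ by prescribing an entire block of \emph{consecutive} indices and forcing the corresponding points to land on an arithmetic progression. Concretely, I fix $N$, pick a large integer $M$, set $\delta := 2^{M-N}$, and introduce the targets $t_i := 2^M + 2i\delta$ together with the windows $W_i := (t_i - \delta/2,\, t_i + \delta/2)$, for $i = 0,1,\dots,2^{N-1}-1$; note that all $t_i$ lie in $[2^M, 2^{M+1})$ and that the $W_i$ are pairwise disjoint and increasing, with $\inf W_{i+1} - \sup W_i = \delta$ and $\sup W_{i+1} - \inf W_i = 3\delta$. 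To the slot $i$ I assign the index $k_i := M-i$, and I consider the event
$$A_M := \bigcap_{i=0}^{2^{N-1}-1}\Big\{\, \tfrac{2^{M-i}}{X_{M-i}} \in W_i \,\Big\}.$$
If $A_M$ occurs and $M \geq 2^{N-1}$ (so that every $k_i \geq 1$), then $U := \{\, 2^{M-i}/X_{M-i} : 0 \leq i \leq 2^{N-1}-1 \,\}$ is a subset of $\Omega^{-1}_{\text{rand},\text{lac}}$ with exactly $2^{N-1}$ elements, it is ordered by $i$ because the windows are disjoint and increasing, and consecutive elements differ by something strictly between $\delta$ and $3\delta$. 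So it suffices to prove that $A_M$ occurs for at least one $M$, almost surely.

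The second step is to bound $\mathbb{P}(A_M)$ from below, uniformly in $M$. The event $\{\, 2^{M-i}/X_{M-i} \in W_i \,\}$ is exactly $\{\, X_{M-i} \in J_i \,\}$, where $J_i = J_i^{(M)}$ is the explicit interval $\big( 2^{M-i}/(t_i+\delta/2),\ 2^{M-i}/(t_i-\delta/2)\big) \cap (0,1]$; using $t_i \asymp 2^M$ one checks that $|J_i| \asymp 2^{-i-N}$ with absolute implied constants, the only case requiring a little attention being $i=0$, where the interval is truncated at $1$ but still has length $\asymp 2^{-N}$. Since $X_M, X_{M-1}, \dots, X_{M-2^{N-1}+1}$ are independent, this gives $\mathbb{P}(A_M) = \prod_{i=0}^{2^{N-1}-1} |J_i| \geq p_N$ for some $p_N \in (0,1)$ depending on $N$ only (and all $M$ large enough). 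Now I take $M_l := l\,2^{N-1}$ for $l \geq 1$: the event $A_{M_l}$ depends only on $\{\, X_k : M_l - 2^{N-1}+1 \leq k \leq M_l \,\}$, so the $A_{M_l}$ involve pairwise disjoint blocks of the $X_k$ and are therefore independent. Since $\sum_l \mathbb{P}(A_{M_l}) \geq \sum_l p_N = \infty$, the Borel–Cantelli lemma (second form) ensures that almost surely $A_{M_l}$ holds for infinitely many $l$; choosing any such $l$ furnishes the scale $\delta = 2^{M_l - N}$ and the set $U$ required by Proposition \ref{P3}.

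The only genuinely delicate point in this argument is the uniform‑in‑$M$ lower bound $\mathbb{P}(A_M) \geq p_N$: one must check that the windows $J_i^{(M)}$ keep length comparable to $2^{-i-N}$ for every large $M$, so that the constant $p_N$ does not depend on $M$ — a short but slightly fiddly computation, the only subtlety being the truncation at $X=1$ for the first slot. Everything else — that the prescribed arithmetic progression yields consecutive gaps in $(\delta,3\delta)$, and that disjoint index blocks make the $A_{M_l}$ independent — is immediate from the construction. Finally, Proposition \ref{P3} together with Claim \ref{P2} and the definition of Perron's capacity yields $P(\Omega_{\text{rand},\text{lac}}) \lesssim 1 < \infty$ almost surely, which is Theorem \ref{T2}.
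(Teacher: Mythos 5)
Your proof is correct and follows essentially the same strategy as the paper: force a block of consecutive indices $k = M, M-1, \dots$ to send the points $2^k/X_k$ into prescribed subwindows of a dyadic interval $[2^M, 2^{M+1})$, note the success probability is a positive constant depending only on $N$, and apply the second Borel--Cantelli lemma over disjoint index blocks. The only cosmetic difference is that the paper fills all $2^N$ equal subintervals of $I_d$ and then keeps every other point to get gaps in $(\delta, 3\delta)$, whereas you center windows of half-width $\delta/2$ directly on an arithmetic progression of spacing $2\delta$.
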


Theorem \ref{T2} is a consequence of Claim \ref{P2} and Proposition \ref{P3}: for any $N$, we can exhibit almost surely a set $U \subset \Omega^{-1}_{\text{rand},\text{lac}}$ of cardinal $2^N$ such that $G(U) \lesssim 1$ and so we obtain $$ P(\Omega_{\text{rand},\text{lac}}) < \infty$$ as expected. The rest of the section is devoted to the proof of Proposition \ref{P3}.

\subsection*{Proof of Proposition \ref{P3}}

We consider the following dyadic intervals for $d \in \mathbb{N}$ $$I_d := \left[2^d ,2^{d+1}\right].$$ We wish to obtain information on the distribution of the points of the set $\Omega_{\text{rand},\text{lac}}$ that may be in the interval $I_d$. We fix a large integer $N \gg 1$ and we divide each dyadic interval $I_d$ into $2^N$ intervals of same length \textit{i.e.} for any $1 \leq l \leq 2^N$ we set $$I_{d,l} = \left[2^d\left(1 + \frac{l-1}{2^N} \right), 2^d\left(1 + \frac{l}{2^N} \right) \right].$$

\begin{figure}[h!]
\centering
\includegraphics[scale=0.7]{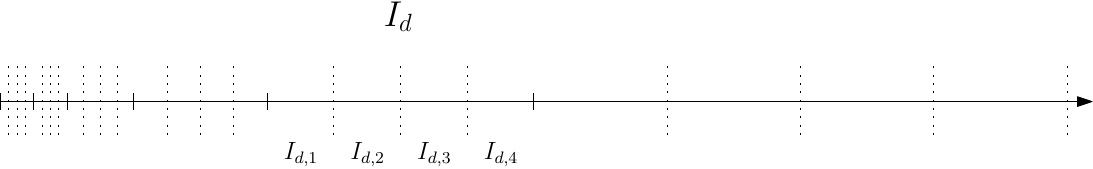}
\caption{For $N = 2$, each intervals $I_d$ is divided in $2^N = 4$ equal parts.}  
\end{figure}

\begin{claim}\label{CCC}
For any $d \geq 2^N+1$ and any $1 \leq l \leq 2^N$, the probability $$\mathbb{P}\left( \frac{2^{d-l}}{X_{d-l}} \in I_{d,l} \right) := p_{N,l}$$ is independent of $d$.
\end{claim}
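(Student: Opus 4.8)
The plan is to unwind the event by a single rescaling and reduce it to a statement about one uniform variable. Write $k = d-l$; the hypotheses $d \ge 2^N+1$ and $1 \le l \le 2^N$ guarantee $k \ge 1$, so $X_k = X_{d-l}$ is one of our uniform random variables and the event is well defined. The key observation is that $I_{d,l}$ is a dilate of a fixed interval, namely $I_{d,l} = 2^d\cdot\bigl[\,1 + (l-1)2^{-N},\, 1 + l2^{-N}\,\bigr]$. Consequently the condition $2^{d-l}/X_{d-l}\in I_{d,l}$ becomes, after dividing through by $2^{d-l}>0$ and taking reciprocals, equivalent to $X_{d-l}\in J_{N,l}$ where
\[
J_{N,l} := \left[\, \frac{1}{2^l\bigl(1 + l2^{-N}\bigr)},\ \frac{1}{2^l\bigl(1 + (l-1)2^{-N}\bigr)} \,\right].
\]
The crucial point is that the factor $2^d$ has cancelled, so $J_{N,l}$ depends only on $N$ and $l$, not on $d$.

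Next I would observe that $J_{N,l}\subset(0,1)$: its right endpoint is at most $2^{-l}\le \tfrac12$ and its left endpoint is strictly positive. Since $X_{d-l}$ is uniformly distributed on $(0,1)$, this yields
\[
\mathbb{P}\left( \frac{2^{d-l}}{X_{d-l}} \in I_{d,l} \right) = |J_{N,l}| = \frac{1}{2^l}\left( \frac{1}{1+(l-1)2^{-N}} - \frac{1}{1+l2^{-N}} \right) = \frac{2^{-N}}{2^l\,\bigl(1+(l-1)2^{-N}\bigr)\bigl(1+l2^{-N}\bigr)},
\]
which is manifestly independent of $d$; this quantity is the promised $p_{N,l}$.

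There is essentially no obstacle here: the only two points requiring a word of care are that $d-l\ge 1$ (so the relevant variable actually exists) and that $J_{N,l}$ lies inside the support $(0,1)$ (so that the uniform law returns exactly the Lebesgue measure of $J_{N,l}$). Both are immediate consequences of $l\ge 1$ together with the standing hypothesis $d\ge 2^N+1$.
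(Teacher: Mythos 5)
Your proof is correct and follows essentially the same route as the paper: invert the condition $2^{d-l}/X_{d-l}\in I_{d,l}$ into an explicit interval for $X_{d-l}$ whose endpoints no longer involve $d$, then read off the probability from the uniform law. The extra checks you include (that $d-l\ge 1$ and that the interval lies in $(0,1)$) are worthwhile details the paper leaves implicit.
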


\begin{proof}
By definition of $I_{d,l}$, one has $2^{d-l}X_{d-l}^{-1}   \in I_{d,l}$ if and only if $$ 2^{-l}\left(1 + \frac{l}{2^N} \right)^{-1} \leq X_{d-l} \leq 2^{-l}\left(1 + \frac{l-1}{2^N} \right)^{-1}.$$ One has $$\mathbb{P}\left( 2^{d-l}X_{d-l}^{-1} \in I_{d,l} \right) = 2^{-l}\left( \left( 1 + \frac{l-1}{2^N} \right)^{-1} - \left(1 + \frac{l}{2^N} \right)^{-1}  \right) := p_{l,N}$$ since the variable $X_{d-l}$ is uniformly distributed in $(0,1)$.
\end{proof}

We fix an extraction $\{d_s : s \geq 1 \}$ satisfying the following property $$d_{s+1} - d_s > 2^N + 1 $$ for any $s \geq 1$ ; this growth condition will assure that the events we will consider are independent and we will be able to apply Borel-Cantelli lemma when needed. Thanks to Claim \ref{CCC} and independence, one can see that for any $s \geq 1$, the following probability is independent of $s$ $$ \mathbb{P}\left( \bigcap_{l \leq 2^N} \left(\frac{2^{d_s-l}}{X_{d_s-l}}  \in I_{d_s,l}\right) \right) =  \prod_{l \leq 2^N} p_{N,l} := \eta_N $$ Now for any $d \geq 1$, we consider the following event $A_{d,N}$ defined as $$A_{d,N} :=  \bigcap_{l \leq 2^N} \left\{ \Omega^{-1}_{\text{rand},\text{lac}} \cap I_{d,l} \neq \emptyset \right\}.$$ In other words, the event $A_{d,N}$ occurs when the random set $\Omega^{-1}_{\text{rand},\text{lac}}$ fills each sub-intervals $\left\{ I_{d,l} : 1 \leq l\leq 2^N \right\}$ with at least one point. In particular, observe that we have $$ \bigcap_{l \leq 2^N} \left(\frac{2^{d-l}}{X_{d-l}}  \in I_{d,l}\right) \subset A_{d,N}.$$ We claim that the union of those events $$B_N := \bigcup_{d \geq 1} A_{d,N}$$ occurs almost surely.

\begin{claim}
One has $\mathbb{P}\left(B_N\right) = 1$.
\end{claim}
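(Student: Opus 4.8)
The plan is to apply the divergence part of the Borel--Cantelli lemma to the family of events
$$C_{d_s} := \bigcap_{l \leq 2^N} \left( \frac{2^{d_s - l}}{X_{d_s - l}} \in I_{d_s,l} \right), \qquad s \geq 1,$$
where $\{d_s : s \geq 1\}$ is the extraction already fixed, satisfying $d_{s+1} - d_s > 2^N + 1$. First I would recall from the discussion preceding the claim that $C_{d_s} \subset A_{d_s,N}$ and that, by Claim \ref{CCC} together with independence of the $X_k$, one has $\mathbb{P}(C_{d_s}) = \eta_N$ for every $s$, a quantity which is strictly positive (it is a finite product of the positive numbers $p_{N,l}$) and independent of $s$. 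Consequently $\sum_{s \geq 1} \mathbb{P}(C_{d_s}) = \infty$.

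Next I would check that the events $\{C_{d_s} : s \geq 1\}$ are mutually independent. The event $C_{d_s}$ is measurable with respect to the $\sigma$-algebra generated by the variables $X_j$ with $j \in J_s := \{d_s - 2^N, d_s - 2^N + 1, \dots, d_s - 1\}$. The growth condition $d_{s+1} - d_s > 2^N + 1$ forces $\max J_s = d_s - 1 < d_{s+1} - 2^N = \min J_{s+1}$, so the index sets $\{J_s\}$ are pairwise disjoint; since the $X_k$ are independent, so are the events $C_{d_s}$. The divergence part of Borel--Cantelli then yields
$$\mathbb{P}\left( \limsup_{s \to \infty} C_{d_s} \right) = 1.$$

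Finally, since $\bigcup_{d \geq 1} A_{d,N} \supset \bigcup_{s \geq 1} A_{d_s,N} \supset \bigcup_{s \geq 1} C_{d_s} \supset \limsup_{s \to \infty} C_{d_s}$, we conclude
$$\mathbb{P}(B_N) = \mathbb{P}\left( \bigcup_{d \geq 1} A_{d,N} \right) \geq \mathbb{P}\left( \limsup_{s \to \infty} C_{d_s} \right) = 1,$$
which proves the claim. The only point requiring care is the independence step, i.e.\ verifying that the blocks of indices $J_s$ are disjoint; this is exactly what the growth condition on the extraction $\{d_s\}$ was chosen to guarantee, so it is immediate. Everything else is a direct application of Borel--Cantelli and the inclusions already established.
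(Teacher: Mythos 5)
Your argument is correct and follows essentially the same route as the paper: apply the divergence part of Borel--Cantelli to the events $\bigcap_{l \leq 2^N} \bigl(2^{d_s-l}/X_{d_s-l} \in I_{d_s,l}\bigr)$, which are independent because the growth condition on the extraction makes the index blocks disjoint, each has probability $\eta_N > 0$, and each is contained in $A_{d_s,N} \subset B_N$. Your write-up merely makes explicit the independence check and the positivity of $\eta_N$, which the paper leaves implicit.
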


\begin{proof}
Indeed we have $$\sum_s  \mathbb{P}\left( \bigcap_{l \leq 2^N} \left(\frac{2^{d_s-l}}{X_{d_s-l}}  \in I_{d_s,l}\right) \right) = \sum_s \eta_N = \infty.$$ Using Borel-Cantelli lemma, one obtains $$\mathbb{P}\left( \bigcup_{s \geq 1} A_{N,d_s} \right) = 1.$$ In particular, one has $\mathbb{P}(B_N) = 1$.
\end{proof}

\begin{figure}[h!]
\centering
\includegraphics[scale=0.7]{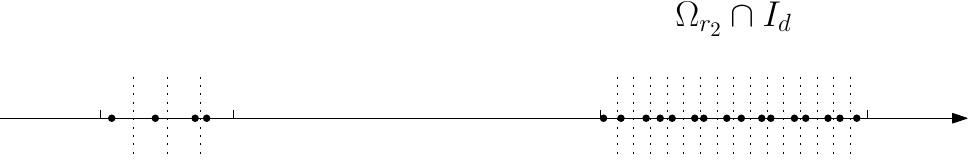}
\caption{The random set $\Omega_{\text{rand},\text{lac}} := \Omega_{r_2}$ contains almost surely uniformly distributed subset of arbitrarily large cardinal.}  
\end{figure}

We can now prove Proposition \ref{P3} since we have $$\mathbb{P}\left( \bigcap_{N \geq 1} B_N \right) = 1.$$ Precisely, for any $N \geq 1$, the event $B_N$ occurs almost surely and this means that there exists a (for each $N$, we just need one) dyadic interval $I_d$ such that $$ \Omega^{-1}_{\text{rand},\text{lac}} \cap I_{d,l} \neq \emptyset $$ for any $1 \leq l \leq 2^N$. We let $u_l$ be one point in $\Omega^{-1}_{\text{rand},\text{lac}} \cap I_{d,l}$ and we claim that the set $$U := \left\{ u_{2l} : 1 \leq l \leq 2^{N-1} \right\}$$ satisfy the condition of Proposition \ref{P3} with $\delta \simeq 2^{d-N}$. In particular, we have $G(U) \lesssim 1 $ for $U \subset \Omega^{-1}_{\text{rand},\text{lac}} $ with arbitrary large cardinal. This yields almost surely $$P(\Omega_{\text{rand},\text{lac}}) \lesssim 1$$ which concludes the proof.

{}

\end{document}